\documentclass[10pt,a4paper,reqno,english]{amsart}
\title[]{A short proof of commutator estimates}

\usepackage{%
    amssymb%
    ,babel%
    ,csquotes%
    ,graphicx%
    ,mathrsfs%
    ,eucal%
    ,xcolor%
    ,enumitem%
    ,geometry%
    ,esint}
\usepackage[%
    pdfauthor={Piero D'Ancona}%
    ,colorlinks=true%
    ,linkcolor=magenta%
    ,citecolor=cyan%
    ,urlcolor=blue%
    ,hyperfootnotes=false%
]{hyperref}




\usepackage{stmaryrd}
  {\baselineskip0pt\normalfont\footnotesize%
  \abovedisplayskip=1pt\abovedisplayshortskip=1pt%
  \belowdisplayskip=1pt\belowdisplayshortskip=1pt%
  \vskip2pt\par\noindent$\llbracket$\ignorespaces}%
  {\ignorespaces$\rrbracket$\vskip2pt}
\numberwithin{equation}{section}
\newtheorem{theorem}{Theorem}[section]

\newtheorem{lemma}[theorem]{Lemma}
\newtheorem{proposition}[theorem]{Proposition}
\theoremstyle{remark}
\newtheorem{remark}[theorem]{Remark}

\theoremstyle{definition}



\date{\today}

\author[P.~D'Ancona]{Piero D'Ancona}
\address{Piero D'Ancona: 
Dipartimento di Matematica\\
Sapienza Universit\`{a} di Roma\\
Piazzale A.~Moro 2\\
00185 Roma\\
Italy}
\email{dancona@mat.uniroma1.it}

\thanks{%
}
\subjclass[2010]{%
}
\keywords{%
Commutator estimates;
Kato--Ponce estimates;
Littlewwod square function;
Muckenhoupt weights}

\begin{document}

\begin{abstract}
  The goal of this note is to give, at least for a restricted range of indices, a short proof of homogeneous commutator estimates for fractional derivatives of a product, using classical tools. Both $L^{p}$ and weighted $L^{p}$ estimates can be proved by the same argument.

  When  the space dimension is 1, we obtain some new estimates in the unexplored range $1/3<r\le1/2$.
\end{abstract}

\maketitle


\section{Introduction}\label{sec:intr}

The homogeneous product estimate, 
also called \emph{fractional Leibniz rule}, states that
\begin{equation}\label{eq:leibn}
  \|D ^{s}(uv)\|_{L^{r}(\mathbb{R}^{n})}
  \lesssim
  \|D ^{s}u\|_{L^{p_{1}}}\|v\|_{L^{p_{2}}}
  +
  \|u\|_{L^{q_{1}}}\|D ^{s}v\|_{L^{q_{2}}}
\end{equation}
for $u,v\in \mathscr{S}(\mathbb{R}^{n})$,
where $D ^{s}=(-\Delta)^{s/2}$.
The conditions on the indices are
\begin{equation}\label{eq:indL}
  \frac1r
  =\frac{1}{p_{1}}+\frac{1}{p_{2}}
  =\frac{1}{q_{1}}+\frac{1}{q_{2}},
  \qquad
  p_{j},q_{j}\in(1,\infty],
  \qquad
  s>\max(0,\frac nr-n)
  \ \text{or}\ 
  s\in 2 \mathbb{Z}^{+}.
\end{equation}
When $r<1$ it is possible to take one of the indices $p_{j}$
(and $q_{i}$)
 equal to 1, provided  the $L^{r}$ 
quasinorm at the left is replaced by a $L^{r,\infty}$ seminorm
(see 
\cite{KatoPonce88-a} and \cite{Ponce91-a}
for the range $1<r<\infty$,
\cite{GrafakosOh14-a} for the extension to values
$r>1/2$, and
\cite{BourgainLi14-a} for the endpoint $r=p_{j}=q_{j}=\infty$). 
The proof relies on 
the Coifman--Meyer theory for bilinear multipliers
\cite{CoifmanMeyer86-a} i.e. on paradifferential methods.
Note that for integer values of $s$ the classical
Gagliardo--Nirenberg estimates are sufficient to 
prove \eqref{eq:leibn}. An analogous estimate holds 
for the non homogeneous case where $D ^{s}$ is replaced by
$J^{s}:=(1-\Delta)^{s/2}$.

Several variants and improvements of \eqref{eq:leibn} are known.
Indeed, the original result of Kato and Ponce 
is the following \emph{commutator estimate} for $s>0$:
\begin{equation}\label{eq:katoponce}
  \|J^{s}(uv)-uJ^{s}v\|_{L^{r}}
  \lesssim
  \|J^{s}u\|_{L^{p_{1}}}\|v\|_{L^{p_{2}}}
  +
  \|\partial u\|_{L^{q_{1}}}\|J^{s-1}v\|_{L^{q_{2}}}.
\end{equation}
An even stronger statement is due to
Kenig, Ponce and Vega \cite{KenigPonceVega93-c}:
\begin{equation}\label{eq:KPV}
  \|D ^{s}(uv)-uD ^{s}v-vD ^{s}u\|_{L^{r}}
  \lesssim
  \|D ^{s_{1}}u\|_{L^{p_{1}}}\|D ^{s_{2}}v\|_{L^{p_{2}}}
\end{equation}
provided
$s=s_{1}+s_{2}$ with $s,s_{j}\in(0,1)$ and
$\frac1r=\frac{1}{p_{1}}+\frac{1}{p_{2}}$ with
$r,p_{1},p_{2}\in(1,\infty)$. Here the additional restrictions
on $s,s_{j}$, are natural, but higher order versions
of \eqref{eq:KPV} have been obtained by D.~Li \cite{Li16-a}
(see also \cite{FujiwaraGeorgievOzawa17-a}).
The paper \cite{Li16-a} gives a comprehensive view of the
state of the art in this genre of inequalities, and in
particular extends \eqref{eq:KPV} to the range
$1/2<r<1$. See also \cite{LenzmannSchikorra16-a} for
an alternative general approach to commutator estimates,
based on the characterizations of functional spaces
given in \cite{BuiCandy17-a}.

We also mention that product and Kato--Ponce commutator
estimates in \emph{weighted} $L^{p}$ spaces, with
Muckenhoupt weights, have been recently proved by
Cruz--Uribe and Naibo in \cite{CruzUribeNaibo16-a},
for the full range of indices \eqref{eq:indL} with the
exception of the endpoint case $r=\infty$.

Our main purpose here is to give a very simple proof of 
the sharper estimate \eqref{eq:KPV}, relying entirely on
classical tools of harmonic analysis. The main drawback is
that in most cases the proof does not cover the full range of 
indices \eqref{eq:indL}. However, in the allowed range of
indices, the method is efficient, and indeed one recovers
both unweighted and weighted estimates with essentially the
same argument. 

The main result of the paper is the following:

\begin{theorem}[]\label{the:commIntro}
  Let $n\ge1$. Assume $s,s_{1},s_{2}$ and $r,p_{1},p_{2}$ satisfy
  \begin{equation*}
    s=s_{1}+s_{2}\in(0,2),
    \quad
    s_{j}\in(0,1),
    \qquad
    \frac1r=\frac{1}{p_{1}}+\frac{1}{p_{2}},
    \quad
    \frac{2n}{n+2s_{j}}<p_{j}<\infty.
  \end{equation*}
  Then for all $u,v\in \mathscr{S}(\mathbb{R}^{n})$ we have
  \begin{equation}\label{eq:commestintro}
    \|D ^{s}(uv)-uD ^{s}v-vD ^{s}u\|_{L^{r}}
    \lesssim
    \|D ^{s_{1}}u\|_{L^{p_{1}}}
    \|D ^{s_{2}}v\|_{L^{p_{2}}}
  \end{equation}
  where the $L^{p_{j}}$ norms at the right must be replaced
  by Hardy space norms $H^{p_{j}}$ if $p_{j}\le 1$.

  Moreover, if we define
  \begin{equation*}
    \textstyle
    q_{j}=p_{j}(\frac 12+\frac{s_{j}}{n})
    \quad\text{if $n\ge2$,}
    \qquad 
    q_{j}=\min\{p_{j},p_{j}(\frac 12+s_{j})\}
    \quad\text{if $n=1$,}\quad 
  \end{equation*}
  and we assume in addition $p_{1},p_{2}>1$ when $n=1$,
  then for any weigths $w_{j}\in A_{q_{j}}$ we have
  \begin{equation}\label{eq:commestwintro}
    \|D ^{s}(uv)-uD ^{s}v-vD ^{s}u\|
      _{L^{r}(w_{1}^{r/p_{1}}w_{2}^{r/p_{2}}dx)}
    \lesssim
    \|D ^{s_{1}}u\|_{L^{p_{1}}(w_{1}dx)}
    \|D ^{s_{2}}v\|_{L^{p_{2}}(w_{2}dx)}.
  \end{equation}
\end{theorem}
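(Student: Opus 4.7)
The plan is to exploit the pointwise integral representation valid for $s\in(0,2)$,
\begin{equation*}
  D^{s}f(x)=c_{n,s}\int_{\mathbb{R}^{n}}\frac{2f(x)-f(x+y)-f(x-y)}{|y|^{n+s}}\,dy,
\end{equation*}
applied to the product $uv$. A short algebraic manipulation of the numerator gives the key identity
\begin{equation*}
  D^{s}(uv)(x)-u(x)D^{s}v(x)-v(x)D^{s}u(x)
  =-c_{n,s}\int_{\mathbb{R}^{n}}\frac{(u(x+y)-u(x))(v(x+y)-v(x))}{|y|^{n+s}}\,dy,
\end{equation*}
where the right-hand side already has the symmetric structure matching the product $s=s_{1}+s_{2}$.

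Splitting the kernel as $|y|^{-n-s}=|y|^{-n/2-s_{1}}\cdot|y|^{-n/2-s_{2}}$ and applying the Cauchy--Schwarz inequality in $y$, one obtains the pointwise bound
\begin{equation*}
  |D^{s}(uv)-uD^{s}v-vD^{s}u|(x)\lesssim \mathcal{S}_{s_{1}}(u)(x)\cdot\mathcal{S}_{s_{2}}(v)(x),
\end{equation*}
where $\mathcal{S}_{\alpha}f(x)=\bigl(\int|f(x+y)-f(x)|^{2}|y|^{-n-2\alpha}\,dy\bigr)^{1/2}$ is the Strichartz first-difference square function. A simple application of Hölder's inequality in $L^{r}$ then separates the two factors into $L^{p_{1}}$ and $L^{p_{2}}$.

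The remaining (and central) step is to invoke the classical equivalence $\|\mathcal{S}_{\alpha}f\|_{L^{p}}\sim\|D^{\alpha}f\|_{L^{p}}$ for $\alpha\in(0,1)$, which holds precisely in the range $p>\frac{2n}{n+2\alpha}$, and with $L^{p}$ replaced by the Hardy space $H^{p}$ when $p\le 1$. Matching $\alpha=s_{j}$ and $p=p_{j}$ recovers exactly the threshold $p_{j}>\frac{2n}{n+2s_{j}}$ in the hypotheses, and concludes \eqref{eq:commestintro}. For the weighted statement \eqref{eq:commestwintro}, the same argument works verbatim once one uses the weighted analogue $\|\mathcal{S}_{\alpha}f\|_{L^{p}(w)}\sim\|D^{\alpha}f\|_{L^{p}(w)}$ for $w\in A_{q}$ with $q=p\cdot\frac{n+2\alpha}{2n}$; this is the origin of the exponents $q_{j}$ appearing in the theorem, and the weighted Hölder inequality distributes the weights as $w_{1}^{r/p_{1}}w_{2}^{r/p_{2}}$ on the left.

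The main obstacle I expect is the equivalence between the Strichartz square function and the $D^{\alpha}$ norm, especially its weighted version and the subtleties near the endpoint $p=\frac{2n}{n+2\alpha}$ (which corresponds to $A_{1}$ weights). In dimension $n=1$ with $s_{j}\le 1/2$ the condition $p_{j}(1/2+s_{j})>1$ is strictly stronger than $p_{j}>1$, which explains the $\min$ in the definition of $q_{j}$ and accounts for the new range $1/3<r\le 1/2$ mentioned in the abstract. The $\pm y$ symmetry of the integrand (so that odd-order cancellations do not contribute) is what pins the admissible range of $s$ to $(0,2)$; going beyond would require higher-order difference operators and a correspondingly modified square function.
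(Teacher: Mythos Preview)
Your proposal is correct and follows essentially the same route as the paper: the hypersingular integral representation, the algebraic identity expressing the commutator as the bilinear form $\int(u(x+y)-u(x))(v(x+y)-v(x))|y|^{-n-s}dy$, Cauchy--Schwarz in $y$ to split into a product of first-difference square functions $\mathcal{S}_{s_j}$ (the paper calls these $\mathcal{D}_{s_j}$), and then H\"older plus the bound $\|\mathcal{S}_{\alpha}f\|_{L^{p}}\lesssim\|D^{\alpha}f\|_{L^{p}}$.

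The one place where the paper is more explicit than your sketch is the last step. Rather than invoking the equivalence $\|\mathcal{S}_{\alpha}f\|_{L^{p}}\sim\|D^{\alpha}f\|_{L^{p}}$ as a black box, the paper proves the \emph{pointwise} inequality $\mathcal{S}_{\alpha}f(x)\lesssim g^{*}_{\lambda}(D^{\alpha}f)(x)$ for $\lambda<1+2\alpha/n$ (Stein's estimate, reproved in the Appendix) and then applies the known $L^{p}$ and weighted $L^{p}$ bounds for $g^{*}_{\lambda}$ due to Stein and Muckenhoupt--Wheeden. This matters for the weighted estimate: the ``weighted Strichartz equivalence'' you cite is not really an off-the-shelf result, and it is precisely the passage through $g^{*}_{\lambda}$ that produces the Muckenhoupt exponent $\min\{p_j,\,p_j\lambda_j/2\}$, hence the $q_j$ in the statement. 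Your reading of the $\min$ for $n=1$ is slightly off: it is not about which constraint is ``stronger'' but about whether $\lambda_j$ can be taken larger than $2$ (possible only when $s_j>1/2$), in which case the relevant class is $A_{p_j}$ rather than $A_{p_j(1/2+s_j)}$.
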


We briefly discuss the result.

\begin{itemize}
  \item The indices $r,p_{j}$ are in the ranges
  (recall that $s\in(0,2)$ and $s_{j}\in(0,1)$)
  \begin{equation*}
    \textstyle
    \frac{n}{n+s}<r<\infty,
    \qquad
    \frac{2n}{n+2s_{j}}<p_{j}<\infty.
  \end{equation*}
  For $n\ge2$ this is a strict subset of the known set
  given by
  $r\in(\frac12 ,\infty]$, $p_{j}\in(1,\infty]$.
  However, when $n=1$, the ranges are
  \begin{equation*}
    \frac{1}{1+s}<r<\infty
    \qquad
    \frac{2}{1+2s_{j}}<p_{j}<\infty
  \end{equation*}
  so that $r$ can be arbitrarily
  close to $\frac13$ and $p_{j}$ to $\frac23$.

  In particular, this shows that the range of indices
  in \cite{GrafakosOh14-a} \emph{is not sharp} and can 
  further be extended.
  This is likely due to the fact that paradifferential
  techniques do not adapt well to the range $p<1$.
  The result suggests that also in dimension $n\ge2$ the usual 
  range of indices may be extended below $1/2$.

  \item 
  The 1--dimensional estimate can be applied also to functions
  of several variables, in the form
  (here 
  $|\partial_{j}|^{s}u=\mathcal{F}^{-1}(|\xi_{j}|^{s}\widehat{u})$)
  \begin{equation*}
    \||\partial_{j}|^{s}(uv)-u|\partial_{j}|^{s}v-v|\partial_{j}|^{s}u\|_{L^{r}}
    \lesssim
    \||\partial_{j}|^{s_{1}}u\|_{L^{p_{1}}}
    \||\partial_{j}|^{s_{2}}v\|_{L^{p_{2}}}
  \end{equation*}
  for $j=1,\dots,n$, and hence also multi--parameter estimates in
  the sense of \cite{MuscaluPipherTao04-a}, 
  \cite{MuscaluPipherTao06-a} can be deduced.

  \item 
  One can deduce from \eqref{eq:commestintro} a fractional
  Leibniz rule via interpolation.
  For instance, in the one dimensional case,
  from \eqref{eq:commestintro} one has
  \begin{equation*}
    \|D^{s}(uv)\|_{L^{r}}
    \lesssim
    \|vD^{s}u\|_{L^{r}}
    +
    \|uD^{s}v\|_{L^{r}}
    +
    \|D^{s_{1}}u\|_{L^{r_{1}}}
    \|D^{s_{2}}v\|_{L^{r_{2}}}.
  \end{equation*}
  To the first two terms one can apply H\"{o}lder's inequality.
  To the third term one applies a
  standard interpolation inequality
  \begin{equation*}
    \|D^{s_{1}}u\|_{L^{r_{1}}}
    \lesssim
    \|D^{s}u\|_{L^{p_{1}}}^{\theta}
    \|u\|_{L^{p_{3}}}^{1-\theta}
  \end{equation*}
  (which follows from the complex interpolation formula
  $\dot H^{s_{1}}_{r_{1}}=[\dot H^{s}_{p_{1}},L^{p_{2}}]_{\theta}$
  with $s_{1}=(1-\theta) s+\theta\cdot0$ and
  $r_{1}^{-1}=(1-\theta) p_{1}^{-1}+\theta p_{2}^{-1}$),
  and a similar one for $v$. Then by Cauchy--Schwartz one obtains
  \eqref{eq:leibn}.

  \item 
  We relax the restriction $s<1$ in
  estimate \eqref{eq:KPV} to $s<2$;
  note that this result is (marginally)
  sharper than the corresponding
  estimates in \cite{Li16-a}.

  \item 
  The weighted estimates \eqref{eq:commestwintro} are new;
  note however that in \cite{CruzUribeNaibo16-a}
  weighted versions of the product estimates \eqref{eq:leibn}
  and of the Kato--Ponce estimates \eqref{eq:katoponce}
  were proved, with conditions on the weights similar to ours.

  \item 
  When $n=1$ and $s>1/2$ , applying a result in
  \cite{Lerner14-a}, we get an explicit and 
  sharp bound of the constant
  in \eqref{eq:commestwintro}, as a function of the Muckenhoupt
  norms of the weights (see Remark \ref{rem:lerner}).
\end{itemize}

The proof is remarkably short and
is based on the explicit representation
\begin{equation}\label{eq:reprc}
  D ^{s}(uv)-uD ^{s}v-vD ^{s}u
  =
  c\int \frac{[u(x+y)-u(x)][v(x+y)-v(x)]}{|y|^{n+s}}dy,
  \qquad
  0<s<2
\end{equation}
for a suitable $c=c(n,s)$.  From this we deduce the following
pointwise bound
\begin{equation*}
  \Bigl|D ^{s}(uv)-uD ^{s}v-vD ^{s}u\Bigr|
  \lesssim
  g_{\lambda_{1}}^{*}(D ^{s_{1}}u)(x)
  \cdot
  g_{\lambda_{2}}^{*}(D ^{s_{2}}v)(x),
  \qquad
  s_{1}+s_{2}=s
\end{equation*}
in terms of the Littlewood nontangential square function
$g^{*}_{\lambda}$. In this way classical $L^{p}$ and weighted
$L^{p}$ bounds for $g^{*}_{\lambda}$ can be applied.
The limitations on the set of indices are unavoidable
due to well known counterexamples for the square functions
(see \cite{Fefferman70-a});
it should be possible to obtain
a more complete result by analyzing directly the Dirichlet form
\begin{equation*}
  T_{s}(u,v)=
  \int \frac{[u(x+y)-u(x)][v(x+y)-v(x)]}{|y|^{n+s}}dy.
\end{equation*}

\begin{remark}[]\label{rem:besov}
  Note that, thanks to the characterization of homogeneous Besov norms
  \begin{equation*}
    \|u\|_{\dot B^{s}_{p,q}}
    =
    \Bigl\|\frac{u(x+y)-u(x)}{|y|^{s+n/q}}
    \Bigr\|_{L^{q}_{y}L^{p}_{x}},
    \qquad
    0<s<1,\qquad p,q\in[1,\infty]
  \end{equation*}
  a Besov version of
  the Kenig--Ponce--Vega estimates \eqref{eq:KPV} is almost
  trivial to prove. Indeed, applying H\"{o}lder's
  inequality to \eqref{eq:reprc} first in $x$ then in $y$, we get
  \begin{equation}\label{eq:besov}
    \textstyle
    \|D ^{s}(uv)-uD ^{s}v-vD ^{s}u\|_{L^{r}}
    \lesssim
    \|u\|_{\dot B^{s_{1}}_{p_{1},q_{1}}}
    \|v\|_{\dot B^{s_{2}}_{p_{2},q_{2}}}
  \end{equation}
  provided $s\in(0,2)$, $s_{j}\in(0,1)$ and
  $r,p_{j},q_{j}\in[1,\infty]$ satisfy
  \begin{equation*}
    s=s_{1}+s_{2},
    \qquad
    \frac1r=\frac1{p_{1}}+\frac1{p_{2}},
    \qquad
    1=\frac{1}{q_{1}}+\frac{1}{q_{2}}.
  \end{equation*}
\end{remark}

\begin{remark}[]\label{rem:additional}
  Besides commutator estimates, a similar approach can be
  used to study the \emph{fractional $p$--Laplacian}
  \begin{equation*}
    (-\Delta)^{s}_{p}u
    =
    c(n,s,p)
    \int_{\mathbb{R}^{n}}
    \frac{|u(x)-u(y)|^{p-2}[u(x)-u(y)]}{|x-y|^{n+sp}}dy
  \end{equation*}
  and more general Dirichlet forms like
  \begin{equation*}
    \mathcal{E}(u,v)=
    \iint
    (u(x)-u(y))(v(x)-v(y))\frac{A(x,y)}{|x-y|^{n+a}}dydx
    \qquad
    a\in(0,2)
  \end{equation*}
  with $\Lambda\ge A(x,y)\ge \Lambda^{-1}>0$,
  see e.g. \cite{IshiiNakamura10-a}, \cite{BassRen13-a}.
\end{remark}

\section{The proofs}\label{sec:proofs}

We begin by recalling the explicit representation
for fractional derivatives as a
\emph{hypersingular integral}, sometimes
named after Aronszajn and Smith:

\begin{lemma}[\cite{AronszajnSmith61-a}]\label{lem:arosm}
  For all $u\in \mathscr{S}(\mathbb{R}^{n})$, $x\in \mathbb{R}^{n}$,
  and $0<s<2$ we have
  \begin{equation}\label{eq:Dsint}
    D ^{s}u(x)=c(n,s)\cdot\lim_{\epsilon \downarrow0}
    \int_{|x|>\epsilon}\frac{u(x+y)-u(x)}{|y|^{n+s}}dy.
  \end{equation}
\end{lemma}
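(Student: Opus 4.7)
The plan is to verify \eqref{eq:Dsint} on the Fourier side. Applying Fubini (where legitimate) to the formal identity $\mathcal{F}[u(\cdot+y)](\xi)=e^{iy\cdot\xi}\widehat{u}(\xi)$, I would obtain
\begin{equation*}
\mathcal{F}\!\left[\int_{|y|>\epsilon}\frac{u(\cdot+y)-u(\cdot)}{|y|^{n+s}}\,dy\right]\!(\xi)
= \widehat{u}(\xi)\,K_\epsilon(\xi),
\qquad
K_\epsilon(\xi):=\int_{|y|>\epsilon}\frac{e^{iy\cdot\xi}-1}{|y|^{n+s}}\,dy,
\end{equation*}
so the task splits into (i) computing the pointwise limit $K_\epsilon(\xi)\to c(n,s)^{-1}|\xi|^s$, and (ii) justifying the interchange of $\mathcal{F}$ with the $y$-integration.

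For (i), I would first use the odd symmetry of $i\sin(y\cdot\xi)/|y|^{n+s}$ on $\{|y|>\epsilon\}$ to reduce $K_\epsilon(\xi)$ to $\int_{|y|>\epsilon}(\cos(y\cdot\xi)-1)|y|^{-n-s}\,dy$. Since $\cos(y\cdot\xi)-1=O(|y|^{2}|\xi|^{2})$ near $0$ and is bounded by $2$ at infinity, this integral is absolutely convergent precisely for $s\in(0,2)$, hence $K_\epsilon(\xi)\to K_0(\xi)$ pointwise. Substituting $y\mapsto y/|\xi|$ and rotating $\xi/|\xi|$ to a fixed direction $e_1$ would give $K_0(\xi)=|\xi|^s K_0(e_1)$, with the (negative, rotation-invariant) constant $K_0(e_1)^{-1}$ defining $c(n,s)$.

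Step (ii) is delicate only for $s\ge 1$, where $(u(x+y)-u(x))/|y|^{n+s}$ is merely conditionally integrable in $y$. I would remedy this by a Taylor subtraction exploiting odd symmetry: for $\epsilon\in(0,1)$,
\begin{equation*}
\int_{|y|>\epsilon}\frac{u(x+y)-u(x)}{|y|^{n+s}}\,dy
=\int_{|y|>\epsilon}\frac{u(x+y)-u(x)-y\cdot\nabla u(x)\,\mathbf{1}_{|y|<1}}{|y|^{n+s}}\,dy,
\end{equation*}
since the subtracted term is odd in $y$ and integrates to zero on each symmetric shell. For $u\in\mathscr{S}$, Taylor's theorem dominates the regularized integrand by $C(u)(|y|^{2-n-s}\mathbf{1}_{|y|<1}+|y|^{-n-s}\mathbf{1}_{|y|\ge 1})$ uniformly in $x$ on compact sets, so Fubini applies and the limit $\epsilon\downarrow 0$ is unconditional; the same symmetrization, applied on the Fourier side, makes the computation of $K_0$ rigorous. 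The main obstacle is thus precisely this conditional convergence in the range $s\in[1,2)$; once it is bypassed, the rest of the argument—symmetry reduction, absolute convergence, and the scaling-plus-rotation computation of the multiplier—is routine.
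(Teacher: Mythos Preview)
Your argument is correct and takes a genuinely different route from the paper. You work on the Fourier side: after justifying Fubini for each fixed $\epsilon>0$, you show $K_\epsilon(\xi)\to K_0(\xi)=c\,|\xi|^s$ by reducing to the cosine integral (absolutely convergent exactly for $s\in(0,2)$) and invoking scaling plus rotation invariance; the odd Taylor subtraction cleanly disposes of the conditional convergence in $y$ for $s\ge 1$. The paper instead stays entirely in physical space: it applies $\Delta_y$ to $(u(x+y)-u(x))/|y|^{n+s-2}$, integrates by parts over $\{|y|>\epsilon\}$, and lets $\epsilon\downarrow 0$ to obtain
\[
\int \frac{\Delta u(x+y)}{|y|^{n+s-2}}\,dy
= s(n+s-2)\cdot \lim_{\epsilon\downarrow 0}\int_{|y|>\epsilon}\frac{u(x+y)-u(x)}{|y|^{n+s}}\,dy,
\]
then recognizes the left side as a Riesz potential, namely $c'D^{s-2}\Delta u=c'D^s u$. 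Your approach is more self-contained and delivers the constant $c(n,s)$ as an explicit integral; the paper's avoids any oscillatory-integral computation but presupposes the Riesz-potential identification $|\cdot|^{2-n-s}*\Delta u=c'D^s u$ (and needs a separate remark at the exceptional parameter $n+s=2$, where its auxiliary factor $n+s-2$ vanishes).
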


\begin{proof}
  Consider the identity
  \begin{equation*}
    \Delta_{y}\frac{u(x+y)-u(x)}{|y|^{n+s-2}}
    =
    \frac{\Delta_{y}u(x+y)}{|y|^{n+s-2}}
    -2c
    \nabla_{y}\cdot
    \{y \frac{u(x+y)-u(x)}{|y|^{n+s}}\}
    -sc\frac{u(x+y)-u(x)}{|y|^{n+s}}
  \end{equation*}
  where $c=n+s-2$. If we integrate over 
  $\{y\in \mathbb{R}^{n}:|y|>\epsilon\}$
  and let $\epsilon \downarrow0$
  we obtain
  \begin{equation*}
    \int \frac{\Delta_{y}u(x+y)}{|y|^{n+s-2}}dy
    =
    sc \cdot
    \lim_{\epsilon \downarrow0}
    \int_{|x|>\epsilon}\frac{u(x+y)-u(x)}{|y|^{n+s}}dy.
  \end{equation*}
  Since the first integral is precisely
  \begin{equation*}
    \int \frac{\Delta_{y}u(x+y)}{|y|^{n+s-2}}dy=
    |\cdot|^{-n-s+2}*\Delta u(x)=
    c'D ^{s-2}\Delta u=c'D ^{s}u
  \end{equation*}
  for a suitable constant $c'=c'(n,s)$, the proof is concluded.
\end{proof}

Thus $D ^{s}u$ can be written as the principal value integral
\eqref{eq:Dsint}; note that in the range $0<s<1$
(and for smooth $u$) the integral is actually absolutely convergent.
In the following we shall write simply
\begin{equation*}
  \int
      \frac{u(x+y)-u(x)}{|y|^{n+s}}dy
    \quad\text{instead of}\quad 
    P.V.\int
      \frac{u(x+y)-u(x)}{|y|^{n+s}}dy.
\end{equation*}
Writing $u_{\pm}=u(x\pm y)$, $v_{\pm}=v(x\pm y)$, $u=u(x)$, 
$v=v(x)$, one has the identity
\begin{equation*}
  (u_{+}v_{+}-uv)-u(v_{+}-v)-(u_{+}-u)v=(u_{+}-u)(v_{+}-v),
\end{equation*}
thus \eqref{eq:Dsint} implies the formula
\begin{equation}\label{eq:commint}
  D ^{s}(uv)-uD ^{s}v-vD ^{s}u
  =
  c(n,s)\cdot T_{s}(u,v),
  \qquad
  0<s<2
\end{equation}
where $T_{s}(u,v) $ is the  bilinear form
\begin{equation*}
  T_{s}(u,v)(x)
  =
  \int \frac{[u(x+y)-u(x)][v(x+y)-v(x)]}{|y|^{n+s}}dy,
  \qquad
  0<s<2.
\end{equation*}

\begin{remark}[]\label{rem:absconv}
  It is possible  to work exclusively with absolutely convergent
  integrals, using the equivalent representation
  \begin{equation*}
    D ^{s}u(x)=
    c
    \int \frac{u(x+y)+u(x-y)-2u(x)}{|y|^{n+s}}dy
  \end{equation*}
  and the identity
  \begin{equation*}
    (u_{+}v_{+}+u_{-}v_{-}-2uv)
    -u(v_{+}+v_{-}-2v)
    -(u_{+}+u_{-}-2u)v
    =
    (u_{+}-u)(v_{+}-v)+(u-u_{-})(v-v_{-}).
  \end{equation*}
\end{remark}

In order to estimate $T_{s}(u,v)$ we use the
\emph{square fractional integral} (see \cite{Stein61-a})
\begin{equation*}
  \mathcal{D}_{\gamma}[u](x)=
  \Bigl(
  \int \frac{|u(x+y)-u(x)|^{2}}{|y|^{n+2\gamma}}dy
  \Bigr) ^{\frac12},
  \qquad
  0<\gamma<1.
\end{equation*}
By Cauchy--Schwartz one has the pointwise bound
\begin{equation}\label{eq:caus}
  |T_{s}(u,v)|
  \le
  \mathcal{D}_{s_{1}}[u]
  \mathcal{D}_{s_{2}}[v],
  \qquad
  s=s_{1}+s_{2},
  \qquad
  s\in(0,2),\ s_{j}\in(0,1)
\end{equation}
and we are reduced to estimate the fractional integral 
$\mathcal{D}_{s}$. To this end, we shall use the
\emph{Littlewood nontangential square function}
$g_{\lambda}^{*}(u)$ defined as follows
($\partial_{x,t}
=(\partial_{x_{1}},\dots,\partial_{x_{n}},\partial_{t})$):
\begin{equation}\label{eq:gla}
  g^{*}_{\lambda}(u)(x)
  =
  \Bigl[
  \int_{0}^{\infty}
  \int_{\mathbb{R}^{n}}
  \bigl(\frac{t}{t+|y|}\bigr)^{\lambda n}
  t^{1-n}
  |\partial_{t,x} U(x-y,t)|^{2}
  dydt
  \Bigr]^{\frac12}
\end{equation}
where $U(x,t)$ is the harmonic extension of $u(x)$ in the
upper half space $\mathbb{R}^{n+1}_{x,t}$:
\begin{equation*}
  U(x,t)=e^{-tD }u=
  \frac{\Gamma(\frac{n+1}{2})}{\pi^{\frac{n+1}{2}}}
  \int_{\mathbb{R}^{n}}
  \frac{tu(x-y)}{(t^{2}+|y|^{2})^{\frac{n+1}{2}}}dy,
  \qquad
  x\in \mathbb{R}^{n},\ t>0.
\end{equation*}

Now, the crucial step is the following pointwise estimate:

\begin{theorem}[\cite{Stein61-a}]\label{the:dsglest}
  Let $n\ge1$, $0<s<1$ and $\lambda<1+\frac{2s}{n}$. Then we have
  \begin{equation*}
    \mathcal{D}_{s}[u](x)\le c(n,s)g^{*}_{\lambda}(D^{s}u)(x)
  \end{equation*}
  with a constant
  independent of $u\in \mathscr{S}(\mathbb{R}^{n})$ 
  and $x\in \mathbb{R}^{n}$.
\end{theorem}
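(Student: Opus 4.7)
The result is classical (Stein, 1961); the plan is to exploit the Poisson harmonic extension, which is the object that $g^{*}_{\lambda}$ is built from. Set $v:=D ^{s}u$ and let $U$, $V$ denote the harmonic extensions of $u$ and $v$ to $\mathbb{R}^{n+1}_{+}$. Since harmonic extension commutes with $D ^{-s}$ and the subordination identity
\begin{equation*}
  D ^{-s}=\frac{1}{\Gamma(s)}\int_{0}^{\infty}\tau^{s-1}e^{-\tau D }\,d\tau
\end{equation*}
holds for $s>0$, one gets the slice-wise relation $U(\cdot,t)=\frac{1}{\Gamma(s)}\int_{0}^{\infty}\tau^{s-1}V(\cdot,t+\tau)\,d\tau$, which after differentiation and the substitution $\sigma=t+\tau$ yields the pointwise subordination bound
\begin{equation*}
  |\partial_{x,t}U(x,t)|\lesssim \int_{t}^{\infty}(\sigma-t)^{s-1}|\partial_{x,t}V(x,\sigma)|\,d\sigma.
\end{equation*}

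Next I would express $u(x+y)-u(x)$ via the fundamental theorem of calculus along the rectangular path $(x,0)\to(x,|y|)\to(x+y,|y|)\to(x+y,0)$ in $\overline{\mathbb{R}^{n+1}_{+}}$; the endpoints reproduce the values of $u$ and the resulting estimate is
\begin{equation*}
  |u(x+y)-u(x)|\le \int_{0}^{|y|}\bigl(|\partial_{t}U(x,t)|+|\partial_{t}U(x+y,t)|\bigr)dt
  +|y|\int_{0}^{1}|\nabla_{x}U(x+\rho y,|y|)|\,d\rho.
\end{equation*}
Combined with the previous subordination bound this majorizes $|u(x+y)-u(x)|$ by a multiple integral of $|\partial_{x,t}V|$ over a region of $\mathbb{R}^{n+1}_{+}$ whose geometry is controlled by $y$.

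The final step is to square, apply Cauchy--Schwarz to detach $|\partial_{x,t}V|^{2}$ from the geometric weight, integrate against $|y|^{-n-2s}dy$, and swap the order of integration. This rewrites $\mathcal{D}_{s}[u](x)^{2}$ as
\begin{equation*}
  \int_{0}^{\infty}\!\int_{\mathbb{R}^{n}}K(x,w,t)\,|\partial_{x,t}V(w,t)|^{2}\,dw\,dt,
\end{equation*}
and the problem reduces to checking the pointwise kernel bound
\begin{equation*}
  K(x,w,t)\lesssim \Bigl(\tfrac{t}{t+|x-w|}\Bigr)^{\lambda n}t^{1-n};
\end{equation*}
comparison with \eqref{eq:gla} then concludes the proof.

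This kernel estimate is the core technical point and the main obstacle. A careful size analysis of the $y$-integral, combined with the integrability of $(\sigma-t)^{s-1}$ afforded by $0<s<1$, shows that the threshold $\lambda<1+\frac{2s}{n}$ is exactly the one for which the nontangential weight $(t/(t+|x-w|))^{\lambda n}$ still decays slowly enough in the tangential direction to absorb the singularity $|y|^{-n-2s}$ produced by $\mathcal{D}_{s}$; the transverse leg of the rectangular path is what couples $|y|$ to the spatial separation $|x-w|$, producing the precise exponent $2s/n$. Beyond this threshold the Poisson weight decays too fast and the majorization fails, which is consistent with the known counterexamples for $g^{*}_{\lambda}$ cited after the theorem statement.
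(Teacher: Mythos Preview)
Your strategy—subordination identity for $D^{-s}$, rectangular path in $\mathbb{R}^{n+1}_{+}$, then Cauchy--Schwarz and comparison with the $g^{*}_{\lambda}$ kernel—is precisely the paper's route. But you stop at the step you yourself flag as the main obstacle, and your heuristic for it misidentifies the critical contribution. You attribute the threshold $\lambda<1+\frac{2s}{n}$ to the transverse leg; in fact that leg is harmless, because along it the spatial displacement $|x-w|=\rho|y|$ is at most $|y|$ while after subordination the height is $\sigma\ge|y|$, so this contribution stays inside a fixed-aperture cone and is controlled by the Lusin area integral, hence by $g^{*}_{\lambda}$ for \emph{every} $\lambda$. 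The same holds for the vertical leg at the near point $x$ (spatial separation zero) and for the high-height portions $\sigma>|y|$ of all three legs.

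The piece that actually forces the restriction is the vertical leg at the \emph{far} endpoint $x+y$, at low heights $\sigma\le|y|$: there $|x-w|=|y|$ can dwarf the height $\sigma$, which is exactly the far-from-tangential regime governed by the decay of the $g^{*}_{\lambda}$ weight. After Fubini in $(t,\sigma)$ and a weighted Cauchy--Schwarz with a small parameter $\epsilon\in(0,2s)$, this piece produces
\[
\int_{|y|\ge\mu}|\partial V(x+y,\mu)|^{2}\,\frac{\mu^{1+2s-\epsilon}}{|y|^{n+2s-\epsilon}}\,dy\,d\mu,
\]
and matching against $(\mu/(\mu+|y|))^{\lambda n}\mu^{1-n}$ on $\{|y|\ge\mu\}$ gives exactly $\lambda\le1+\frac{2s-\epsilon}{n}$. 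One further ingredient you do not mention: for the three non-critical pieces the paper uses the mean value property of the harmonic function $\partial V$ to pass from pointwise values (equivalently, from the Littlewood--Paley $g$-function) to averages over balls sitting inside the cone; without this step those terms do not fit directly under the $g^{*}_{\lambda}$ integral either.
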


\begin{proof}
  The result is stated in \cite{Stein61-a} and a 
  hint is given
  in \cite{Stein70-a}. For the sake of competeness, we include
  a proof in the Appendix of the paper.
\end{proof}

Summing up, we have proved the following

\begin{proposition}[Pointiwise commutator estimate]\label{the:ptw}
  Let $n\ge1$ and
  \begin{equation*}
    \textstyle
    s=s_{1}+s_{2}\in(0,2),
    \qquad
    s_{j}\in (0,1),
    \qquad
    \lambda_{j}<1+\frac{2s_{j}}n.
  \end{equation*}
  Then the following  pointwise estimate holds, with a constant
  independent of $u,v\in \mathscr{S}(\mathbb{R}^{n})$ 
  and $x\in \mathbb{R}^{n}$:
  \begin{equation}\label{eq:ptwcru}
    \Bigl|D ^{s}(uv)-uD ^{s}v-vD ^{s}u\Bigr|
    \lesssim
    g_{\lambda_{1}}^{*}(D ^{s_{1}}u)(x)
    \cdot
    g_{\lambda_{2}}^{*}(D ^{s_{2}}v)(x).
  \end{equation}
\end{proposition}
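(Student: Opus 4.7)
The proposition is essentially a concatenation of the three results already established in this section, so the plan is simply to line them up in sequence. I would first invoke the identity \eqref{eq:commint}, which converts the commutator $D^{s}(uv) - u D^{s} v - v D^{s} u$ into a constant multiple of the bilinear form $T_{s}(u,v)(x)$; this step is valid precisely because $s \in (0,2)$, which is exactly the range for which Lemma \ref{lem:arosm} provides the Aronszajn--Smith representation used to derive \eqref{eq:commint}.

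Next I would apply the pointwise Cauchy--Schwarz bound \eqref{eq:caus}, obtained by splitting the kernel as $|y|^{-(n+s)} = |y|^{-(n/2 + s_{1})} \cdot |y|^{-(n/2 + s_{2})}$ and distributing the two factors between $u(x+y)-u(x)$ and $v(x+y)-v(x)$. Since $s_{j} \in (0,1)$, both resulting square fractional integrals $\mathcal{D}_{s_{j}}$ fall in the range where they are well defined for Schwartz functions, and one obtains $|T_{s}(u,v)(x)| \le \mathcal{D}_{s_{1}}[u](x)\,\mathcal{D}_{s_{2}}[v](x)$.

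Finally I would apply Theorem \ref{the:dsglest} to each factor separately, with $\gamma = s_{j}$ and the corresponding $\lambda_{j} < 1 + 2s_{j}/n$; this replaces $\mathcal{D}_{s_{j}}[w]$ by a constant times $g^{*}_{\lambda_{j}}(D^{s_{j}} w)$, and multiplying the two pointwise inequalities yields exactly \eqref{eq:ptwcru}. The only point requiring any care is the bookkeeping of indices — namely that $s \in (0,2)$ matches the Aronszajn--Smith range, $s_{j} \in (0,1)$ matches the admissibility of the square fractional integral, and $\lambda_{j} < 1 + 2s_{j}/n$ matches Stein's hypothesis — but these constraints coincide with the hypotheses of the proposition, so no genuine obstacle arises and the proof reduces to a few lines of citation.
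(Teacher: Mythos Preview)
Your proposal is correct and matches the paper's approach exactly: the proposition is stated in the paper as a summary (``Summing up, we have proved the following''), and its proof is precisely the concatenation of \eqref{eq:commint}, the Cauchy--Schwarz bound \eqref{eq:caus}, and Theorem \ref{the:dsglest} applied to each factor, with the same index bookkeeping you describe.
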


It remains to estimate the square functions at the right
of \eqref{eq:ptwcru}. We recall a few well known 
properties of $g^{*}_{\lambda}$:

\begin{theorem}[]\label{the:gla}
  Let $n\ge1$, $\lambda>1$. 
  For any $u\in \mathscr{S}(\mathbb{R}^{n})$, 
  $g^{*}_{\lambda}(u)$ satisfies the following estimates,
  with constants independent of $u$:
  \begin{enumerate}
  [label=(\roman*)]
    \item $\|g^{*}_{\lambda}(u)\|_{L^{p}}\lesssim\|u\|_{L^{p}}$
    for $\lambda>\max\{1,\frac2p\}$ and $0<p<\infty$,
    where the $L^{p}$ norm at the right must be replaced by
    a Hardy space $H^{p}$ norm if $p\le 1$
    \item 
    $\|g^{*}_{\lambda}\|_{L^{p}(wdx)}\lesssim\|u\|_{L^{p}(wdx)}$
    for $\lambda>\max\{1,\frac2p\}$, 
    $1<p<\infty$ and $w\in A_{\min\{p,\frac{p \lambda}2\}}$.
  \end{enumerate}
\end{theorem}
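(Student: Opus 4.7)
The plan is to derive both (i) and (ii) from a single pointwise subordination of $g^{*}_{\lambda}$ to the Hardy--Littlewood maximal operator $M$ composed with powers of the standard Lusin area function
\begin{equation*}
Su(x) = \Bigl(\int_{0}^{\infty}\int_{|y|<t} t^{1-n}|\partial_{t,x} U(x-y,t)|^{2} \, dy \, dt\Bigr)^{1/2},
\end{equation*}
and then to invoke the well--known unweighted and weighted mapping properties of $M$ and $S$. The key step is a Fefferman--Stein type pointwise bound: for any $q$ satisfying $\lambda > 2/q$,
\begin{equation*}
g^{*}_{\lambda}(u)(x)^{q} \le C_{n,\lambda,q}\, M\bigl(|Su|^{q}\bigr)(x).
\end{equation*}
To prove it, I would decompose the integration region in \eqref{eq:gla} into the dyadic shells $E_{k}=\{2^{k-1}t\le t+|y|<2^{k}t\}$, $k\ge 0$. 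On $E_{k}$ the weight $(t/(t+|y|))^{\lambda n}$ is comparable to $2^{-k\lambda n}$ and the remaining integral is essentially an area integral over a Lusin cone of aperture $2^{k}$; a Fubini--type averaging (combined with H\"older's inequality at exponent $2/q$ when $q<2$) estimates the contribution of $E_{k}$ to $g^{*}_{\lambda}(u)(x)^{q}$ by $C\,2^{-k(\lambda-2/q)n}\,M(|Su|^{q})(x)$, and summing in $k$ yields the claimed bound since $\lambda>2/q$.

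Given this estimate, statement (i) for $1<p<\infty$ follows by choosing any $q$ with $q<p$ and $\lambda>2/q$ (possible exactly when $\lambda>\max(1,2/p)$) and then applying the $L^{p/q}$ norm:
\begin{equation*}
\|g^{*}_{\lambda}u\|_{L^{p}}^{q} \le \|M(|Su|^{q})\|_{L^{p/q}} \lesssim \|Su\|_{L^{p}}^{q} \lesssim \|u\|_{L^{p}}^{q},
\end{equation*}
using $p/q>1$ for the maximal function and the classical Lusin area bound $\|Su\|_{L^{p}}\lesssim \|u\|_{L^{p}}$. The weighted estimate (ii) is identical, invoking now $M:L^{p/q}(w)\to L^{p/q}(w)$ for $w\in A_{p/q}$ and the weighted area bound $\|Su\|_{L^{p}(w)}\lesssim \|u\|_{L^{p}(w)}$ for $w\in A_{p}$; optimizing $q$ in the range $(2/\lambda,p)$ and using the openness of the $A_{r}$ classes yields the Muckenhoupt condition $w\in A_{\min(p,p\lambda/2)}$. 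The Hardy space endpoint $p\le 1$ in (i) I would treat through the tent--space or atomic characterization of $H^{p}$: the same pointwise bound reduces matters to the classical $H^{p}\to L^{p}$ boundedness of the Lusin area function.

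The main obstacle is the pointwise subordination itself, which has to balance sharply the aperture growth $2^{kn}$ from enlarging the Lusin cone against the weight decay $2^{-k\lambda n}$; this is precisely what forces the threshold $\lambda>2/q$ and ultimately dictates both the sharp unweighted range $\lambda>\max(1,2/p)$ and the Muckenhoupt exponent $p\lambda/2$ in (ii). A secondary difficulty is the $H^{p}$ endpoint for $p<1$, where $Sa$ need not lie in $L^{1}_{\mathrm{loc}}$ for an $H^{p}$ atom $a$, so one must argue through molecules or tent spaces rather than by direct integration of the area function.
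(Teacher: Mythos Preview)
The paper does not actually prove this theorem: its entire proof is a list of citations (Stein, Aguilera--Segovia, Muckenhoupt--Wheeden, Torchinsky). So there is nothing to compare at the level of argument; your sketch is attempting to supply what the paper defers to the literature, and the overall strategy you outline---decompose $g^{*}_{\lambda}$ into area integrals over cones of dyadic aperture and then compare with the aperture--one area function $S$---is indeed the route taken in those references.

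There is, however, a genuine gap in the central step. The pointwise subordination $g^{*}_{\lambda}(u)(x)^{q}\le C\,M(|Su|^{q})(x)$ cannot be obtained from the ``Fubini--type averaging'' you describe, because the underlying change--of--aperture estimate it requires, namely $(S_{\alpha}u(x))^{q}\lesssim \alpha^{n}M(|Su|^{q})(x)$, is false as a purely geometric inequality. In dimension $n=1$ with $q=2$, place unit point masses of $t^{1-n}|\partial U|^{2}\,dy\,dt$ at $(\xi_{j},\tau_{j})=(x+(\alpha-1)2^{j},2^{j})$ for $j=0,\dots,N$ with $\alpha\ge 4$: each mass lies in the $\alpha$--cone at $x$, so $S_{\alpha}u(x)^{2}=N+1$, while the unit cones $\{|z-\xi_{j}|<2^{j}\}$ are pairwise disjoint, so $|Su|^{2}\le 1$ everywhere and $M(|Su|^{2})(x)\le 1$. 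The claimed bound would give $N+1\lesssim \alpha$, which fails for $N\gg\alpha$. Of course point masses are not $|\partial U|^{2}$ for a harmonic $U$, but your Fubini argument makes no use of harmonicity, so it cannot close.

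The cited proofs avoid this obstacle: Aguilera--Segovia and Muckenhoupt--Wheeden proceed via good--$\lambda$ inequalities relating the distribution functions of $g^{*}_{\lambda}u$ and $Su$ (valid for $A_{\infty}$ weights, hence giving both (i) and (ii)), rather than a pointwise bound. If you want a pointwise route, you must exploit the subharmonicity of $|\partial U|^{p}$ to smear out the cones before comparing apertures; this is substantially more delicate than the averaging you indicate.
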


\begin{proof}
  Estimate (i) is proved in \cite{Stein61-b}, \cite{Stein70-a},
  \cite{AguileraSegovia77-a}; see also \cite{Torchinsky86-a},
  \cite{MuckenhouptWheeden74-b} for the range $0<p\le1$.
  Estimate (ii) is from \cite{MuckenhouptWheeden74-b}
  (Corollary at p.110).
\end{proof}

In the borderline case $\lambda=2/p$ estimate (i) is valid with
$L^{p,\infty}$ in place of $L^{p}$ at the left
(\cite{Fefferman70-a}, \cite{AguileraSegovia77-a}).
The corresponding weighted weak estimate is contained in
\cite{MuckenhouptWheeden74-b}; moreover, estimate (ii)
holds also for $p\le1$
provided the weighted $L^{p}$ norm at the left  is replaced by a
weighted Hardy space norm  (see \cite{MuckenhouptWheeden74-b}).
The sharp form of the constant in estimate
(ii) is known if $\lambda>2$, $1<p<\infty$:
\begin{equation*}
  \|g^{*}_{\lambda}\|_{L^{p}(wdx)}
  \le C(n,p,\lambda)[w]_{A_{p}}^{\max\{\frac12 ,\frac{1}{p-1}\}}
  \|u\|_{L^{p}(wdx)}
\end{equation*}
as proved in \cite{Lerner14-a}, but it is still unkown for
$\lambda\le2$. Recall that $[w]_{A_{p}}$ for $1<p<\infty$
is the minimal $C$ such that the averages over any ball
$B \subset \mathbb{R}^{n}$ satisfy
\begin{equation*}
  \textstyle
  \fint_{B}w \cdot (\fint_{B}w^{-\frac{1}{p-1}})^{p-1}\le C.
\end{equation*}

Now it is a simple matter to prove the main result:


\begin{proof}[Proof of Theorem \ref{the:commIntro}]
  By H\"{o}lder's inequality and \eqref{eq:ptwcru} we have
  \begin{equation*}
    \|D ^{s}(uv)-uD ^{s}v-vD ^{s}u\|_{L^{r}}
    \lesssim
    \|g^{*}_{\lambda_{1}}(D^{s_{1}} u)\|_{L^{p_{1}}}
    \|g^{*}_{\lambda_{2}}(D^{s_{2}} v)\|_{L^{p_{2}}}
  \end{equation*}
  for any $r,p_{1},p_{2}\in(0,\infty]$ with
  $\frac1r=\frac{1}{p_{1}}+\frac{1}{p_{2}}$,
  any $s_{j}\in(0,1)$
  and any $\lambda_{j}<1+\frac{2s_{j}}{n}$.
  Applying Theorem \ref{the:gla} (i) we get \eqref{eq:commestintro}
  provided we can pick $\lambda_{j}$ such that
  \begin{equation*}
    \textstyle
    \max\{1,\frac{2}{p_{j}}\}<\lambda_{j}<1+\frac{2s_{j}}{n},
  \end{equation*}
  which is possible by the conditions on $p_{j},s_{j}$.
  (Use Hardy norms at the right if $p_{j}\le1$).

  The second estimate is proved in a similar way
  using Theorem \ref{the:gla} (ii).
  We obtain the following condition on the weights:
  \begin{equation*}
    \textstyle
    w_{j}\in A_{q_{j}},\qquad
    1<q_{j}<\min\{p_{j},p_{j}(\frac 12+\frac{s_{j}}{n})\}.
  \end{equation*}
  Thanks to the self improving property of
  Muckenhoupt classes (i.e., if $w\in A_{q}$ with $q>1$ then
  $w\in A_{q_{1}}$ for some $q_{1}<q$), we can relax the 
  condition to
  \begin{equation*}
    \textstyle
    w_{j}\in A_{q_{j}},\qquad
    1<q_{j}=\min\{p_{j},p_{j}(\frac 12+\frac{s_{j}}{n})\}.
  \end{equation*}
  In dimensions $n\ge2$ we have always $s_{j}/n\le1/2$ and
  hence we obtain
  \begin{equation*}
    \textstyle
    w_{j}\in A_{q_{j}},\qquad
    1<q_{j}=p_{j}(\frac 12+\frac{s_{j}}{n}),
  \end{equation*}
  while in dimension $n=1$ we have
  \begin{equation*}
    \textstyle
    w_{j}\in A_{q_{j}},\qquad
    1<q_{j}=\min\{p_{j},p_{j}(\frac 12+s_{j})\}
  \end{equation*}
  and the proof is concluded.
\end{proof}

\begin{remark}[]\label{rem:lerner}
  In the case $n=1$ and $s_{1},s_{2}>1/2$, the values of
  $\lambda_{1},\lambda_{2}$ in the previous proof can be taken
  both $>2$ and then Lerner's result \cite{Lerner14-a} gives
  the following explicit bound on the constant of 
  \eqref{eq:commestwintro}:
  if $p_{1},p_{2}>1$,
  \begin{equation}\label{eq:lernbd}
    C \le c(n,a,s_{j},r,p_{j})
    [w_{1}]_{A_{p_{1}}}^{\max\{\frac12 ,\frac{1}{p_{1}-1}\}}
    [w_{2}]_{A_{p_{2}}}^{\max\{\frac12 ,\frac{1}{p_{2}-1}\}}.
  \end{equation}
\end{remark}

\appendix
\section{Proof of Theorem \ref{the:dsglest}}
\label{sec:appe_proo_of_}

It is sufficient to prove the inequality at $x=0$.
Let $u\in \mathscr{S}(\mathbb{R}^{n})$ and let
$U(x,t)=e^{-tD }u$ be its harmonic extension on 
$\mathbb{R}^{n+1}_{+}$ for $t>0$.
Following the hint in \cite{Stein70-a} V.6.12,
we can estimate the difference $u(y)-u(0)$
with the integral of $\partial_{x,t} U(x,t)$ along any path contained
in $\mathbb{R}^{n+1}_{+}$ joining the points $(0,0)$ and
$(y,0)$. We choose a path made by
a vertical segment joining $(0,0)$ with $(0,|y|)$,
followed by a horizontal segment joining $(0,|y|)$ with $(y,|y|)$,
followed by a vertical segment joining $(y,|y|)$ with $(y,0)$.
We get
\begin{equation*}
  \textstyle
  |u(y)-u(0)|\le
  \int_{0}^{|y|}
  (
  |\partial U(y,\lambda)|+|\partial U(0,\lambda)|+
  |\partial U(\lambda \widehat{y},|y|)|
  )d \lambda
\end{equation*}
where $\widehat{y}=y/|y|$. 
If we denote with
$F(x,t)=e^{-tD }D ^{s}u$ the harmonic extension of
$D ^{s}u$, we have the formula
\begin{equation*}
  \textstyle
  U(z,s)=\int_{0}^{\infty}F(z,t+\mu)\mu^{s-1}d\mu
\end{equation*}
which implies
\begin{equation*}
  \textstyle
  |u(y)-u(0)|\le
  \int_{0}^{|y|}\int_{\lambda}^{\infty}
  (
  |\partial F(y,\mu)|+
  |\partial F(0,\mu)|+
  |\partial F(\lambda \widehat{y},\mu+|y|-\lambda)|
  )(\mu-\lambda)^{s-1}d\mu d \lambda.
\end{equation*}
We split the RHS in the sum of four pieces
$I+II+III+IV$ where
\begin{equation*}
  \textstyle
  I=
  \int_{0}^{|y|}\int_{\lambda}^{|y|}
  |\partial F(y,\mu)|
  (\mu-\lambda)^{s-1}d\mu d \lambda,
\end{equation*}
\begin{equation*}
  \textstyle
  II=
  \int_{0}^{|y|}\int_{\lambda}^{|y|}
  |\partial F(0,\mu)|
  (\mu-\lambda)^{s-1}d\mu d \lambda,
\end{equation*}
\begin{equation*}
  \textstyle
  III=
  \int_{0}^{|y|}\int_{\lambda}^{|y|}
  |\partial F(\lambda \widehat{y},\mu+|y|-\lambda)|
  (\mu-\lambda)^{s-1}d\mu d \lambda,
\end{equation*}
\begin{equation*}
  \textstyle
  IV=
  \int_{0}^{|y|}\int_{|y|}^{\infty}
  (
  |\partial F(y,\mu)|+
  |\partial F(0,\mu)|+
  |\partial F(\lambda \widehat{y},\mu+|y|-\lambda)|
  )(\mu-\lambda)^{s-1}d\mu d \lambda.
\end{equation*}
The term $IV$ can be estimated for any $A$ with
\begin{equation*}
  \textstyle
  IV \lesssim
  \sup\limits_{|z|\le|y|\le \lambda}
  \lambda^{A}|\partial F(z,\lambda)|
  \cdot
  \int_{0}^{|y|}\int_{|y|}^{\infty}
  \mu^{-A}(\mu-\lambda)^{s-1}d\mu d \lambda.
\end{equation*}
The integral is finite if $s<A<1$ and we get
\begin{equation*}
  \textstyle
  IV
  \lesssim
  \sup\limits_{|z|\le|y|\le \lambda}
  \lambda^{A}|y|^{1+s-A}|\partial F(z,\lambda)|.
\end{equation*}
Using the mean value property of the harmonic function $\partial F$,
we get
\begin{equation*}
  \textstyle
  (IV)^{2}
  \lesssim
  \sup\limits_{|z|\le|y|\le \lambda}
  \lambda^{2A}|y|^{2+2s-2A}
  \lambda^{-n-1}
  \int\limits_{D }
  |\partial F(\xi,\tau)|^{2}d \xi d \tau
\end{equation*}
where
\begin{equation*}
  D=
  \{(\xi,\tau):|\xi-z|\le \lambda/2,\ |\tau-\lambda|\le \lambda/2\}.
\end{equation*}
Now we note that
\begin{equation*}
  D \subset
  \{(\xi,\tau)\in \Gamma ,\ \tau\ge|y|/2\}
\end{equation*}
where $\Gamma$ is the cone of aperture 3
\begin{equation*}
  \Gamma=\{(\xi,\tau):|\xi|\le3 \tau\}
\end{equation*}
and moreover, if $(\xi,\tau)\in D$, we have $\tau \simeq \lambda$
and actually $\lambda/2\le \tau\le 3 \lambda /2$.
This gives
\begin{equation*}
  \textstyle
  (IV)^{2}
  \lesssim
  |y|^{2+2s-2A}
  \int_{\Gamma,\  \tau\ge|y|/2}
  |\partial F(\xi,\tau)|^{2}
  \tau^{2A-n-1}
  d \xi d \tau.
\end{equation*}
Now dividing by $|y|^{n+2s}$ and integrating in $y$ we have,
inverting the order of integration,
\begin{equation*}
  \textstyle
  \int \frac{(IV)^{2}}{|y|^{n+2s}}dy
  \lesssim
  \int_{\Gamma}(\int_{|y|\le 2 \tau}|y|^{2-n-2A}dy)
  |\partial F(\xi,\tau)|^{2}\tau^{2A-n-1}d \xi d \tau
\end{equation*}
and finally
\begin{equation}\label{eq:IV}
  \textstyle
  \int \frac{(IV)^{2}}{|y|^{n+2s}}dy
  \lesssim
  \int_{\Gamma}
  |\partial F(\xi,\tau)|^{2}\tau^{1-n}d \xi d \tau.
\end{equation}
(In particular we see that the piece $IV$ is estimated by a
Lusin area integral on a cone of fixed aperture).

The terms $III$ and $II$ satisfy an estimate similar to $IV$.
Indeed, in $III$ we have 
$\mu+|y|-\lambda\ge|y|\ge \lambda=|\lambda \widehat{y}|$
so that
\begin{equation*}
  \textstyle
  III
  \lesssim
  \sup\limits_{|z|\le|y|\le \lambda}
  \lambda^{A}|\partial F(z,\lambda)|
  \cdot
  \int_{0}^{|y|}\int_{\lambda}^{|y|}
  (|y|+\mu-\lambda)^{-A}(\mu-\lambda)^{s-1}d \mu d \lambda;
\end{equation*}
using $(|y|+\mu-\lambda)^{-A}\le|y|^{-A}$ we have
\begin{equation*}
  \textstyle
  \int_{0}^{|y|}\int_{\lambda}^{|y|}
  (|y|+\mu-\lambda)^{-A}(\mu-\lambda)^{s-1}d \mu d \lambda
  \le
  |y|^{-A}\cdot|y|^{1+s}
\end{equation*}
so that
\begin{equation*}
  III
  \lesssim
  \sup\limits_{|z|\le |y|\le \lambda}
  \lambda^{A}|y|^{1+s-A}|\partial F(z,\lambda)|.
\end{equation*}
Proceeds as for $IV$ we obtain that $III$ satisfies \eqref{eq:IV}.

For the term $II$ we have by Fubini and
then by Cauchy--Schwartz, for $\epsilon\in(0,2s)$,
\begin{equation*}
  \textstyle
  II
  \simeq
  \int_{0}^{|y|}|\partial F(0,\mu)|\mu^{s}d\mu
  \lesssim
  (\int_{0}^{|y|}|\partial F(0,\mu)|^{2}\mu^{1+2s-\epsilon}
  d\mu)^{1/2}
  |y|^{\epsilon/2}
\end{equation*}
which gives
\begin{equation*}
  \textstyle
  \int \frac{(II)^{2}}{|y|^{n+2s}}dy
  \lesssim
  \int|y|^{\epsilon-n-2s}\int_{0}^{|y|}
  |\partial F(0,\mu)|^{2}\mu^{1+2s-\epsilon} d\mu
\end{equation*}
\begin{equation*}
  \textstyle
  =
  \int_{0}^{\infty}
  |\partial F(0,\mu)|^{2}\mu^{1+2s-\epsilon}
  \int_{|y|\ge \mu}\frac{dy}{|y|^{n+2s-\epsilon}}
   d\mu
\end{equation*}
so that
\begin{equation*}
  \textstyle
  \int \frac{(II)^{2}}{|y|^{n+2s}}dy \lesssim\int_{0}^{\infty}
  |\partial F(0,\mu)|^{2}\mu d\mu.
\end{equation*}
We now apply the mean value property:
\begin{equation*}
  \textstyle
  \int \frac{(II)^{2}}{|y|^{n+2s}}dy
  \lesssim
  \int_{0}^{\infty}
  \mu^{-n}
  \int_{|\xi|\le\mu/2,|\tau-\mu|\le \mu/2}
  |\partial F(\xi,\tau)|^{2}d \xi d \tau d \mu.
\end{equation*}
Note that the domain of integration of the inner integral is
contained in the cone $\Gamma$ defined above, and that
$\mu \simeq \tau$; more precisely we have
\begin{equation*}
  \textstyle
  \frac \mu 2\le \tau\le \frac{3 \mu}{2}
  \quad\text{i.e.}\quad 
  \frac{2 \tau}{3}\le \mu\le 2 \tau.
\end{equation*}
This gives, after exchanging the order of integration,
\begin{equation*}
  \textstyle
  \int \frac{(II)^{2}}{|y|^{n+2s}}dy
  \lesssim
  \int_{\Gamma}|\partial F(\xi,\tau)|^{2}
  \tau^{-n}\int_{2 \tau/3}^{2 \tau}d \mu d \xi d \tau
\end{equation*}
and finally we get as in \eqref{eq:IV}
\begin{equation}\label{eq:II}
  \textstyle
  \int \frac{(II)^{2}}{|y|^{n+2s}}dy
  \lesssim
  \int_{\Gamma}
  |\partial F(\xi,\tau)|^{2}\tau^{1-n}d \xi d \tau.
\end{equation}

The remaining piece $I$ is the only one requiring cones of
arbitrary aperture, and hence the function $g^{*}_{\lambda}$.
Exchanging the order of integration and using
Cauchy--Schwartz we get
\begin{equation*}
  \textstyle
  I=s^{-1}\int_{0}^{|y|}|\partial F(y,\mu)|\mu^{s}
  d \mu
  \lesssim
  (\int_{0}^{|y|}|\partial F(y,\mu)|^{2}\mu^{1+2s-\epsilon}
  d\mu)^{1/2}
  |y|^{\epsilon/2}
\end{equation*}
for any $\epsilon\in(0,2 s)$. Thus we have
\begin{equation*}
  \textstyle
  \int \frac{(I)^{2}}{|y|^{n+2s}}dy
  \lesssim
  \int\int_{0}^{|y|}
  |\partial F(y,\mu)|^{2}\mu^{1+2s-\epsilon}d\mu
  |y|^{\epsilon-n-2s}dy
\end{equation*}
that is to say
\begin{equation}\label{eq:I}
  \textstyle
  \int \frac{(I)^{2}}{|y|^{n+2s}}dy
  \lesssim
  \int_{|y|\ge\mu}
  |\partial F(y,\mu)|^{2}
  \frac{\mu^{1+2s-\epsilon}}{|y|^{n+2s-\epsilon}}dyd\mu.
\end{equation}

Summing all the pieces, we have proved the estimate
\begin{equation*}
  \textstyle
  \int \frac{|u(x+y)-u_{y}|^{2}}{|y|^{n+2s}}dy
  \lesssim
  \int_{\Gamma}
  |\partial F(\xi,\tau)|^{2}\tau^{1-n}d \xi d \tau
  +
  \int_{|y|\ge\mu}
  |\partial F(y,\mu)|^{2}
  \frac{\mu^{1+2s-\epsilon}}{|y|^{n+2s-\epsilon}}dyd\mu.
\end{equation*}
The first term at the right is obviously bounded by
$g^{*}_{\lambda}(D ^{s}u)(0)$ for all $\lambda$, while it is easy
to check that the second integral is bounded by
$g^{*}_{\lambda}(D ^{s}u)(0)$ provided
$\lambda=1+\frac{2s}{n}-\frac\epsilon n$. Since $\epsilon$ is
arbitraruly small, the proof is concluded.

\bibliographystyle{abbrv}

\end{document}